\documentclass[submission,copyright,creativecommons]{eptcs}

\newcommand{\sss}[1]{[\textbf{SS: }#1]}

\usepackage{amsmath,amsthm}
\usepackage{amssymb}
\usepackage{mathtools}
\usepackage[all,cmtip,2cell]{xy}
\UseTwocells
\usepackage{appendix}
\usepackage{comment}
\usepackage{multicol}
\newtheorem{theorem}{Theorem}
\newtheorem{lemma}{Lemma}
\newtheorem{corollary}{Corollary}

\newtheorem{proposition}{Proposition}
\newtheorem*{theorem*}{Theorem}


\newcommand{\bra}[1]{\left\langle #1 \right|}
\newcommand{\ket}[1]{\left| #1 \right\rangle}

\newcommand{\C}{\mathbb{C}}
\newcommand{\R}{\mathbb{R}}
\newcommand{\N}{\mathbb{N}}

\newcommand{\cat}[1]{\mathbf{#1}}

\newcommand{\Cone}{\cat{Cone}}

\newcommand{\WStar}{\mathbf{W^*\text{-}Alg}}
\newcommand{\CStar}{\mathbf{C^*\text{-}Alg}}

\newcommand{\PNU}{\CStar_\mathrm{P}}
\newcommand{\CPNU}{\CStar_\mathrm{CP}}

\newcommand{\wPNU}{\WStar_\mathrm{P}}
\newcommand{\wCPNU}{\WStar_\mathrm{CP}}

\newcommand{\opp}[1]{#1^\mathbf{op}}
\newcommand{\sa}[1]{#1_{\mathrm{sa}}}

\newcommand{\new}{\operatorname{new}}


\newcommand{\mat}[1]{M_{#1}}

\newcommand{\Mat}{\mathbb{N}_\mathrm{Mat}}

\newcommand{\NatCP}{\mathbb{N}_\mathrm{CP}}

\newcommand{\CC}{\mathbb{C}}

\newcommand{\hide}[1]{}

\title{Infinite-Dimensionality in Quantum Foundations:\\ W*-algebras as Presheaves over Matrix Algebras}
\author{
Mathys Rennela
\institute{Radboud University\\
    Nijmegen, The Netherlands}
\and
Sam Staton
\institute{Oxford University\\
    Oxford, United Kingdom}
\and
Robert Furber
\institute{Aalborg University\\
    Aalborg, Denmark}
}

\newcommand{\Rpnz}{\mathbb{R}_{>0}}
\newcommand{\FDCP}{\mathbf{Fd}\CStar_{\mathrm{CP}}}
\newcommand{\CFDCP}{\mathbf{Fd}\mathbf{C}\CStar_{\mathrm{CP}}}
\newcommand{\Set}{\mathbf{Set}}

\begin{document}
\maketitle

\begin{abstract}
In this paper, W*-algebras are presented as canonical colimits of diagrams of matrix algebras and completely positive maps. In other words, matrix algebras are dense in W*-algebras.
\end{abstract}

\section*{Introduction}
\quad 
In the foundations of quantum mechanics and quantum computing, there is often a split between research using 
infinite dimensional structures and research using finite dimensional structures. 
On the one hand, in axiomatic quantum foundations there is often a focus on finite dimensional spaces and matrix mechanics
(e.g.~\cite{CQM,tull-OTP,hardy5,vicary-cat-fd-C*,chiribella-OTP,baez-rosetta-stone,coecke-axiomatic,coecke-martin-domains,coecke-meas-sums,quantum-weakest-preconditions,lee-barrett-GPT}),
and the same is true for circuit based quantum computing~(e.g.~\cite{feynman,nielsen-chuang}). 
On the other hand, infinite dimensional spaces arise naturally in subjects such as quantum field theory \cite{zeidler-QFT-QED},
and moreover the register space in a scalable quantum computer arguably has an infinite dimensional aspect (see e.g.~\cite{QC-w-qu-phot}),
which has led some researchers to use infinite dimensional spaces in the semantics of quantum programming languages~\cite{cho-qpl14-extended,rennela-mfps30,rennela-staton-mfps31,gielerak-sawerwain-generalised}.
The `spaces' in quantum theory are really non-commutative, so we understand them as W*-algebras, by analogy to Gelfand duality \cite[1.4]{connes}.\\

A natural question, then, is whether foundational research frameworks that focus on finite dimensional structures can approximate their infinite-dimensional counterparts.
In brief, the answer to this question is positive when one deals with W*-algebras. 
In detail, when we  focus on completely positive maps, as is usual in quantum foundations and quantum computation,
one can show that every infinite dimensional W*-algebra is a canonical colimit of matrix algebras. This characteristic is expressed in the following theorem, which constitutes our main result.
Note that it is about colimits in the opposite category of W*-algebras and completely positive maps, and therefore about limits in the category of W*-algebras and completely positive maps.

\begin{theorem*}
Let $\wCPNU$ be the category of W*-algebras together with completely positive maps. 
Let $\NatCP$ be the category whose objects are natural numbers, with $n$ considered as the algebra of $n\times n$ complex matrices, and completely positive maps between them. Let $\cat{Set}$ be the category of sets and functions.

The hom-set functor $\wCPNU(-,=): \opp{\wCPNU}\to [\NatCP, \cat{Set}]$ is full and faithful.
\end{theorem*}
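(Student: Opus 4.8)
The plan is to read this functor as a restricted Yoneda embedding and prove that the matrix algebras are dense: the statement says exactly that $A\mapsto\wCPNU(A,-)$ reconstructs each W*-algebra from its probes by matrix algebras. Faithfulness is the easy half. If $f_1,f_2\colon B\to A$ are distinct completely positive maps, pick $b$ with $f_1(b)\neq f_2(b)$; since the normal states of a W*-algebra separate its points, some normal state $\phi$ of $A$---an element of $\wCPNU(A,M_1)$ with $M_1=\C$---satisfies $\phi\circ f_1\neq\phi\circ f_2$, so the induced natural transformations already differ at the component $n=1$ evaluated at $\phi$. The substance is fullness: given $\eta\colon\wCPNU(A,-)\nattrans\wCPNU(B,-)$ in $[\NatCP,\Set]$, I must produce a single completely positive map $f\colon B\to A$ with $\eta_n(h)=h\circ f$ for every $n$ and every $h\colon A\to M_n$.

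First I would reconstruct $f$ from the bottom component $\eta_1$ alone. Since $M_1=\C$, the set $\wCPNU(A,\C)$ is exactly the cone $(A_*)^{+}$ of normal positive functionals, and $\eta_1\colon(A_*)^{+}\to(B_*)^{+}$. Two naturality moves pin down its algebraic structure. Homogeneity: for $\lambda\geq 0$ the scalar map $M_1\to M_1$, $z\mapsto\lambda z$, is completely positive, so naturality gives $\eta_1(\lambda\phi)=\lambda\,\eta_1(\phi)$. Additivity: given $\phi,\psi\in(A_*)^{+}$, the diagonal assignment $a\mapsto\mathrm{diag}(\phi(a),\psi(a))$ is a completely positive map $A\to M_2$; feeding it through $\eta_2$ and then composing with the completely positive maps $X\mapsto X_{11}$, $X\mapsto X_{22}$ and $X\mapsto\tr X$ from $M_2$ to $M_1$, naturality forces $\eta_1(\phi+\psi)=\eta_1(\phi)+\eta_1(\psi)$. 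Thus $\eta_1$ is additive and positively homogeneous on the cone, hence extends to a positive real-linear map on $\sa{(A_*)}=(A_*)^{+}-(A_*)^{+}$ and then to a complex-linear map $S\colon A_*\to B_*$.

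Next I would upgrade $S$ to a morphism. The preduals are base-norm spaces whose positive cones are closed and generating (with the norm additive on the cone), so by automatic continuity of positive operators between such ordered Banach spaces $S$ is bounded. Its Banach-space adjoint is a weak-*-continuous, hence normal, linear map $f:=S^*\colon B\to A$ (using $B=(B_*)^{*}$ and $A=(A_*)^{*}$) satisfying, by construction, $\phi\circ f=\eta_1(\phi)$ for every $\phi\in(A_*)^{+}$. The key observation is that this single identity at $n=1$ already forces $\eta_n(h)=h\circ f$ for all $n$: for $h\colon A\to M_n$ and any vector functional $\omega_\xi(X)=\langle\xi,X\xi\rangle$ on $M_n$, which is completely positive, naturality of $\eta$ along $\omega_\xi$ gives $\omega_\xi\circ\eta_n(h)=\eta_1(\omega_\xi\circ h)=(\omega_\xi\circ h)\circ f=\omega_\xi\circ(h\circ f)$, and since the vector functionals separate $M_n$ by polarization, $\eta_n(h)=h\circ f$. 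Complete positivity of $f$ is then automatic: for every completely positive $h\colon A\to M_n$ the composite $h\circ f=\eta_n(h)$ lies in $\wCPNU(B,M_n)$, and a normal map $f$ with $h\circ f$ completely positive for all such $h$ is itself completely positive---one tests positivity of $(f\otimes\id_m)(X)$ by pairing against $\sum_k e_k\otimes e_k$ after applying an $h$ of the compression form $a\mapsto V^{*}aV$.

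I expect the main obstacle to be the analytic step of turning the purely order-theoretic map $\eta_1$ into a bona fide bounded, normal, completely positive map: establishing that the cone map extends linearly and is automatically continuous on the predual, and then assembling complete positivity of $f$ from the completely positive images $\eta_n(h)$. The combinatorial heart---the $M_2$-diagonal trick forcing additivity and the reduction of all components to $n=1$---is clean, but it rests on having exactly the right supply of completely positive maps $M_2\to M_1$ and $M_n\to M_1$ inside $\NatCP$, so I would take care that each auxiliary map invoked is genuinely a morphism of $\NatCP$.
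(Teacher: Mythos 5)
Your proof is correct, and it reaches the statement by a route that genuinely differs from the paper's in the treatment of fullness. The shared core is the bottom component: like the paper, you use naturality against scalar maps and against block--diagonal/trace maps (the paper's idempotent $j$ and $t(\begin{smallmatrix}a&b\\c&d\end{smallmatrix})=a+d$ on $M_{2n}$ play the role of your $X\mapsto X_{11}$, $X\mapsto X_{22}$, $X\mapsto\operatorname{tr}X$) to force $\eta_1$ to be a cone map, and then the automatic boundedness of positive functionals on the predual (the content of Proposition~\ref{prop:NPLF-ff}, proved in the appendix via base-norm spaces) to promote it to a normal linear $f=S^*:B\to A$. After that the paths diverge: the paper invokes its earlier Theorem~\ref{thm:CPNUff} (completely positive maps are exactly the $\Mat$-natural families of positive maps, proved via a stabilizer-states lemma), re-indexes along the self-duality $\opp\Mat\cong\Mat$ and the bijection $\wCPNU(M_n(A),\CC)\cong\wCPNU(A,M_n)$, and finally checks that $\Set$-naturality over $\NatCP$ already implies $\Cone$-naturality; you instead collapse every component to $n=1$ by naturality against the vector functionals $\omega_\xi:M_n\to\CC$ plus polarization, and recover complete positivity of $f$ by pairing $M_m(f)(X)$ against the maximally entangled vector after applying $M_m(h)$ to the CP maps $h\circ f=\eta_m(h)$. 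This re-derives the content of Theorem~\ref{thm:CPNUff} in a self-contained way and buys you independence from the prior paper, at the cost of redoing that work by hand. Two points to tighten: testing positivity in $M_m(A)$ with compressions $a\mapsto V^*aV$ presupposes a fixed faithful normal representation of $A$ (equivalently, use the bijection between normal states of $M_m(A)$ and normal CP maps $A\to M_m$ that the paper cites from \cite{stormer}); and your appeal to automatic continuity of the positive \emph{operator} $S$ can be replaced by the weaker functional-level fact the paper actually uses, defining $f(b)\in A=(A_*)^*$ directly as the automatically bounded positive functional $\phi\mapsto\eta_1(\phi)(b)$.
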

Recall that the category of presheaves $[\NatCP, \cat{Set}]$ is a free colimit completion of $\opp{\NatCP}$ (e.g.~\cite[III.7]{maclane}). 
Recall too that a full-and-faithful functor is the same thing as a full subcategory, up to categorical equivalence. 
Thus we can say that every W*-algebra is a canonical colimit of matrix algebras.
A category theorist would say that the matrix algebras are
\emph{dense} in the W*-algebras (following \cite[Ch.~5]{kelly}), making an
analogy with topology (e.g. the rational numbers are dense
among the reals).
 We phrase the result in terms of the dual category $\opp{\wCPNU}$ instead of $\wCPNU$ 
with the idea that $\opp{\wCPNU}$ is a category of non-commutative spaces.

\hide{This theorem partially relies on \cite{rennela-staton-mfps31}, in which two of the authors established a categorical characterization of completely positive maps, a widely accepted model of first-order quantum computation \cite{cho-qpl14-extended,malherbe-scott-selinger,pagani-selinger-valiron-popl14,rennela-mfps30,selinger-qpl,selinger-cpm,staton-popl15}, as natural families of positive maps. There, the notion of representation of a category $\cat{C}$ in another category $\cat{R}$ was characterized by the existence of a full and faithful functor
$F:\cat C\to\cat R$, that is, a functor for which each function
$F_{A,B}:\cat C(A,B)\to \cat R(F(A),F(B))$ is a bijection. From a programming language perspective, where objects interpret types and  morphisms interpret programs, a representation result gives two things. Firstly, it gives a way of interpreting types as different mathematical structures, which can be illuminating or convenient, while retaining essentially the same range of interpretable programs. Secondly, since $\cat R$ may be bigger than $\cat C$, it gives the chance to interpret more types without altering the interpretation of programs at existing types.\\
}
\newpage
\paragraph{Related ideas.} Our theorem is novel (as far as we can tell)
but the theme is related to various research directions.
\begin{itemize}
\item Density theorems occur throughout category theory. Perhaps the most famous situation is simplicial sets, which are 
functors $[\opp \Delta,\Set]$, where $\Delta$ is a category whose objects are natural numbers, with $n$ considered as the $n$-simplex. There is a restricted hom-functor $\cat{Top}\to[\opp \Delta,\Set]$; it is full and faithful up-to homotopy when one restricts $\cat{Top}$ to CW-complexes.
\item There is a long tradition of studying limits and colimits of *-homomorphisms, rather than completely positive maps. Notably, AF C*-algebras are limits of directed diagrams of finite-dimensional C*-algebras and *-homo\-morphisms~\cite{bratteli-AF-C*}.
\item In a dual direction, C*-algebras and *-homomorphisms form a locally presentable category~\cite{pelletier-rosicky} and so there exists a small dense set of C*-algebras with respect to *-homomorphisms. This dense set of C*-algebras has not been characterized explicitly, to our knowledge, but it is likely to already contain infinite dimensional C*-algebras.

\item Operator spaces and operator systems are generalizations of C*-algebras that still permit matrix constructions.
These are also related to the presheaf construction, as explained in Section~\ref{sec:cp-tvect}. 
\item In programming language theory, aside from quantum computation, the idea of defining computational constructs on dense subcategories is 
increasingly common (e.g.~\cite{mellies-segal}). 
\item Density also appears in quantum contextuality. 
  For example, the boolean algebras are dense in the
  effect algebras~\cite{staton-uijlen}, and compact Hausdorff spaces
  are dense in piecewise C*-algebras~\cite[Thm.~4.5]{flori-fritz}.
  This is a compelling way to study
  contextuality: the base category offers a classical perspective on
  the quantum situation. However, it is unclear how to study tensor products of C*/W*-algebras in this way. 
\item The Karoubi envelope of a category is a very simple colimit
  completion. The Karoubi envelope of $\NatCP$ contains the category of finite-dimensional C*-algebras and completely positive maps, as discussed in \cite{selinger-karoubi,heunen-kissinger-selinger}.
\item Pagani, Selinger and Valiron~\cite{pagani-selinger-valiron-popl14} used a free biproduct completion of $\NatCP$ to model 
higher-order quantum computation. It remains to be seen whether every object of their category can be thought of as a W*-algebra,
and whether their type constructions correspond to known constructions of W*-algebras.
\item Malherbe, Scott and Selinger~\cite{malherbe-scott-selinger} proposed to study quantum computation using presheaf categories $[\opp{\mathbf Q},\Set]$, where $\mathbf Q$ is a category related to $\NatCP$. Thus our result links their proposal for higher-order quantum computation with work based on operator algebra.
\end{itemize}

\section{Preliminaries on operator algebras}
\label{sec:preliminaries}

In this section we briefly recall some key concepts from operator algebra. See \cite{sakai,takesaki1} for a complete introduction.
We recall C*-algebras, which are, informally, non-commutative topological spaces, as a step towards W*-algebras, which are, informally, a non-commutative measure spaces \cite[1.4]{connes}. The positive elements of the algebras are thought of as observables, and so we focus on linear
maps that preserve positive elements. Completely positive maps are, roughly, positive maps that remain positive when quantum systems are combined. 

\paragraph{C*-algebras}
Recall that a (unital) C*-algebra
is a vector space over the field of complex numbers 
that also has multiplication, a unit and an involution,
satisfying associativity laws for multiplication, 
involution laws (e.g. $x^{**}=x$, $(xy)^*=y^*x^*$, $(\alpha x)^*=\bar \alpha (x^*)$) and
such that the norm of an element $x$ is given by the square root of the spectral radius of $x^*x$, which gives a Banach space.

\paragraph{Finite dimensional examples: qubits and bits}
A key source of examples of finite dimensional C*-algebras are
the algebras $\mat k$ of $k\times k$ complex matrices,
with matrix addition and multiplication,
and where involution is conjugate transpose.
In particular the set $\mat 1=\CC$ of complex numbers 
has a C*-algebra structure, and 
the $2\times 2$ matrices, $\mat 2$, contain
the observables of qubits.

Another example is the algebras of pairs of complex numbers, 
$\CC^2$, with componentwise addition and multiplication. 
This contains the observables of classical bits. 

\paragraph{Positive elements and positive maps}
An element $x \in A$
is \emph{positive} if it can be written in the form $x=y^* y$ for $y \in
A$. 
We denote by $A^+$ the set of positive elements of a C*-algebra $A$ and define the following.
\hide{An element $x \in A$ is self-adjoint if $x=x^*$. We write $\sa{A} \hookrightarrow A$ (resp. $A^+ \hookrightarrow
A$) for the subset of self-adjoint (resp. positive) elements of $A$,
which is a convex cone and thus induces a partial order structure: $x
\leq y$ if and only if $y-x \in A^+$.}

Let $f : A \rightarrow B$ be a linear map between the underlying vector spaces.
The map $f$ is \textit{positive} if it preserves positive elements and therefore restricts to a function $A^+ \to B^+$. A positive map $A \to \C$ will be called a state on $A$.

\paragraph{W*-algebras and normal maps}
In what follows, we will focus on W*-algebras, which are 
C*-algebras $A$ that have a predual, that is, such that there is a Banach space $A_*$ whose dual is isomorphic to $A$~\cite{sakai}.
The positive elements of a C*-algebra always form a partial order, with
$x\leq y$ if and only if $(y-x) \in A^+$. Moreover, in a W*-algebra, if a directed subset of $A$ has an upper bound, then it has a least upper bound.
It is natural to require that (completely) positive maps are moreover \emph{normal}, which means that they preserve such least upper bounds.

W*-algebras encompass all finite dimensional C*-algebras,
and also the algebras of bounded operators on any Hilbert space,
the function space $L^\infty(X)$ for any standard measure space $X$,
and the space $\ell^\infty(\N)$ of bounded sequences.
\paragraph{Matrix algebras and completely positive maps}
If $A$ is a C*-algebra then the $k\times k$ matrices valued in $A$
also form a C*-algebra, $\mat k(A)$, which is a W*-algebra if $A$ is.  For instance $\mat k(\CC)=\mat k$,
and $\mat k(\mat l)\cong \mat {k\times l}$.  Informally, we can think of the
W*-algebra $\mat k(A)$ as representing $k$ possibly-entangled copies of $A$.  This
can be thought of as a kind of tensor product: as a vector space
$\mat k(A)$ is a tensor product $\mat k(\CC)\otimes A$. 

Let $f : A \rightarrow B$ be a linear map between the underlying vector spaces.
The map $f$ is \textit{completely positive} if it is $n$-positive for every $n \in \N$, i.e. the map $\mat n(f):\mat n(A) \to \mat n(B)$ defined for every matrix $[x_{i,j}]_{i,j\leq n} \in \mat n(A)$ by $\mat n(f)([x_{i,j}]_{i,j\leq n})=[f(x_{i,j})]_{i,j\leq n}$ is positive for every $n \in \N$.

Completely positive maps and positive maps are related as follows: a positive map $f: A \to B$ of C*-algebras, for which $A$ or $B$ is commutative, is completely positive. Hence, every (sub-)state on a C*-algebra $A$ is completely positive.

We write $\wPNU$ for the category of W*-algebras and normal positive maps, 
and $\wCPNU$ for the category of W*-algebras and normal completely positive maps.

\section{Naturality and representations of complete positivity}
\label{sec:nat-cp}
In this section, we recall the categorical characterization of
completely positive maps as natural families of positive maps. This gives a technique for building representations of completely positive maps (see \cite{rennela-staton-mfps31} for more examples).\\

In the previous section we explained that for every W*-algebra $A$ the matrices valued in $A$ form a W*-algebra again.
This construction $(A,m)\mapsto M_m(A)$ 
is functorial. To make this precise, we introduce the category $\Mat$ of complex matrices:
the objects are non-zero natural numbers seen as dimensions, 
and the morphisms $m\to n$ are $m\times n$ complex matrices. Composition is matrix multiplication. 
(We remark that the category $\Mat$ is equivalent to the category $\cat{FdVect}_\C$ of
finite-dimensional complex vector spaces and linear maps, since every finite-dimensional vector space is isomorphic to $\C^n$. It is also equivalent to 
the category $\cat{FdHilb}$ of finite-dimensional Hilbert spaces and linear maps, since
every such space has a canonical inner product.)\\

The construction of matrices of elements of a W*-algebra can be made into 
a functor $\wCPNU\times \Mat \to \wPNU$.
It takes a pair $(A,m)$ to $\mat m(A)$ and a pair of morphisms
$(f,F):(A,m)\to (B,n)$ 
to the positive map $F^*{(f\!\_\,)}F:\mat m(A)\to\mat n(B)$.\\

We will consider this functor in curried form, 
$M:\wCPNU\to [\Mat,\wPNU]$.
It takes a W*-algebra $A$ to a functor,
i.e. an indexed family of W*-algebras, $M(A)=\{M_n(A)\}_n$.
A completely positive map 
$f:A\to B$ is taken to the corresponding family of positive maps $M(f)=\{M_n(f):M_n(A)\to M_n(B)\}_n$. 
This gives the main result of \cite{rennela-staton-mfps31}: the functor $M$ is full and faithful, i.e. completely positive maps are in natural bijection with families of positive maps.\\

\begin{theorem}[\cite{rennela-staton-mfps31}]
\label{thm:CPNUff}
The functor $M:\wCPNU\to[\Mat,\wPNU]$ is full and faithful.\\
\end{theorem}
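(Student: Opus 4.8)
The statement is a Yoneda-style density result, so I unwind it concretely. Since $\opp{\wCPNU}(A,B)=\wCPNU(B,A)$ and a morphism of presheaves is a natural transformation $\wCPNU(A,=)\nattrans\wCPNU(B,=)$, the functor sends a normal completely positive $\phi\colon B\to A$ to the family of precomposition maps $f\mapsto f\circ\phi$. Faithfulness is then the easy half: if $\phi\neq\psi$ then $\phi(b)\neq\psi(b)$ for some $b$, and since normal states separate the points of a W*-algebra there is a normal state $\omega$ on $A$ with $\omega\circ\phi\neq\omega\circ\psi$ in $\wCPNU(B,\C)=\wCPNU(B,\mat 1)$; these are the $n=1$ components, so the induced natural transformations differ. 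The real content is fullness, i.e.\ reconstructing $\phi$ from an arbitrary natural transformation $\alpha$.

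The plan for fullness is to recover $\phi$ from the single component $\alpha_1\colon\wCPNU(A,\C)\to\wCPNU(B,\C)$, which is a map on normal positive functionals (the positive cones of the preduals). First I would show $\alpha_1$ is additive and positively homogeneous using naturality against morphisms of $\NatCP$. For additivity, given $\omega_1,\omega_2$ I form $\omega_1\oplus\omega_2\colon A\to\mat 2$ (diagonal, hence normal completely positive) and apply naturality at the diagonal compressions $p_1,p_2\colon\mat 2\to\C$ and at $\tr=p_1+p_2$, which forces $\alpha_1(\omega_1+\omega_2)=\alpha_1(\omega_1)+\alpha_1(\omega_2)$; homogeneity comes from naturality at the scalar map $z\mapsto\lambda z$ on $\C$ for $\lambda\in\R_{\ge 0}$. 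Consequently $\alpha_1$ extends uniquely to a linear map $T\colon A_*\to B_*$ carrying $A_*^+$ into $B_*^+$. Invoking automatic continuity of positive maps on an ordered Banach space with closed generating cone, $T$ is bounded, and I set $\phi:=T^*\colon B\to A$; as a Banach-space adjoint it is weak*-continuous, hence normal, and it is positive because $\omega(\phi(b))=(T\omega)(b)=\alpha_1(\omega)(b)\ge 0$ for $\omega\in A_*^+$ and $b\ge 0$.

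It remains to prove $\alpha_n(f)=f\circ\phi$ for every $n$ and every $f\in\wCPNU(A,\mat n)$, which simultaneously exhibits $\alpha$ as the image of $\phi$ (fullness) and forces $\phi$ to be completely positive. For each $v\in\C^n$ the vector state $\bra{v}(-)\ket{v}\colon\mat n\to\C$ is a morphism of $\NatCP$, and naturality at it gives $\bra{v}f(\phi(b))\ket{v}=\bra{v}\alpha_n(f)(b)\ket{v}$ for all $b$; since a matrix is determined by its quadratic form, $f\circ\phi=\alpha_n(f)$, which is completely positive. To deduce that $\phi$ itself is completely positive, fix $X=[x_{ij}]\in\mat k(B)^+$ and a faithful normal representation $A\subseteq B(H)$; for $\Xi=(\xi_1,\dots,\xi_k)\in H^k$ I choose a finite-dimensional subspace $K\ni\xi_1,\dots,\xi_k$ with inclusion $V\colon K\hookrightarrow H$, so that $f:=V^*(-)V\colon A\to\mat n$ is normal completely positive and $\langle\Xi,[\phi(x_{ij})]\,\Xi\rangle=\langle\Xi,\mat k(f\circ\phi)(X)\,\Xi\rangle\ge 0$ because $f\circ\phi=\alpha_n(f)$ is completely positive. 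Hence $\mat k(\phi)$ is positive for all $k$, so $\phi\in\wCPNU(B,A)$ and $\alpha$ is its image.

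The main obstacle I expect is the step from positivity to well-definedness and complete positivity of $\phi$: establishing that the cone map $\alpha_1$ extends to a \emph{bounded} $T$ (so that $T^*$ really lands in $A$ and is normal) rests on automatic continuity of positive maps between preduals, and upgrading positivity to complete positivity genuinely requires the higher components $\alpha_n$ together with the compression argument above rather than any purely formal manipulation. These are precisely the points where the concrete W*-structure of $A$—its faithful normal representation and the abundance of finite-rank compressions as completely positive maps into matrix algebras—must be used, in the same spirit as Theorem~\ref{thm:CPNUff}.
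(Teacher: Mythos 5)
Your proposal does not prove the stated theorem; it proves a different one. Theorem~\ref{thm:CPNUff} concerns the covariant matrix functor $M:\wCPNU\to[\Mat,\wPNU]$, which sends a W*-algebra $A$ to the family of matrix algebras $\{M_n(A)\}_n$ with the $\Mat$-action given by conjugation, $V\mapsto V^*(-)V$. A morphism $M(A)\to M(B)$ in $[\Mat,\wPNU]$ is therefore a family of normal \emph{positive} maps $\alpha_n:M_n(A)\to M_n(B)$ satisfying $\alpha_n(V^*yV)=V^*\alpha_m(y)V$ for every $m\times n$ complex matrix $V$ and $y\in M_m(A)$. What you analyse instead is a natural transformation between hom-functors $\wCPNU(A,M_{(-)})\nattrans\wCPNU(B,M_{(-)})$ indexed by $\NatCP$, reconstructing a map $B\to A$ from its action on normal positive functionals via preduals, automatic boundedness, and separation by normal states. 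That is the content of the paper's main result (Theorem~\ref{thm:main-theorem}), which is \emph{deduced from} Theorem~\ref{thm:CPNUff} together with Proposition~\ref{prop:NPLF-ff}; none of that machinery has any purchase on the functor $M$, where no hom-sets, states or duals appear in the codomain.

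For Theorem~\ref{thm:CPNUff} itself, faithfulness is immediate since $M(f)_1=f$. Fullness amounts to showing that a natural family $\alpha=\{\alpha_n\}$ as above satisfies $\alpha_n=M_n(\alpha_1)$: once that is known, each $M_n(\alpha_1)=\alpha_n$ is positive, so $\alpha_1$ is completely positive and $\alpha=M(\alpha_1)$. The one nontrivial step is passing from the identity $v^*\alpha_n(y)v=\alpha_1(v^*yv)$ (which is just naturality at the column vectors $v\in\C^n$, viewed as morphisms $n\to 1$ of $\Mat$) to the conclusion $\alpha_n=M_n(\alpha_1)$; this is Lemma~\ref{cp-m2-lemma}, proved in \cite{rennela-staton-mfps31} by a polarization-style argument using (stabilizer) states of $\C^n$ to recover the entries of $\alpha_n(y)$ from the $B$-valued quadratic forms $v^*\alpha_n(y)v$. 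Your step ``since a matrix is determined by its quadratic form'' is the right germ of this idea, but you apply it to scalar matrices in $\mat n$ inside the hom-functor setting rather than to the operator-valued matrices in $M_n(B)$ that Theorem~\ref{thm:CPNUff} is actually about. To repair the proposal you would need to discard the predual argument entirely and instead prove Lemma~\ref{cp-m2-lemma} (or an equivalent) for the family $\{\alpha_n\}$ of maps between matrix algebras over $A$ and $B$.
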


Faithfulness is obvious, since for any CP-map $f:A\to B$ we have $M(f)_1=f$. Proving fullness is more involved and requires the following lemma.\\

\begin{lemma}\label{cp-m2-lemma}
Consider two positive maps $f_n: M_n(B) \to M_n(A)$ and $f_1: B \to A$ of C*-algebras. The following conditions are equivalent:
\begin{enumerate}
\item
$\forall y\in M_n(B), v\in \CC^n.\ \ v^*(f_n(y))v = f_1(v^*yv)$
\item
$f_n = M_n (f_1)$.
\end{enumerate}
\end{lemma}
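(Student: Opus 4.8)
The plan is to prove the equivalence by a polarization argument, regarding both conditions as statements about the individual entries of the matrices involved. Throughout, write $y = [y_{ij}] \in M_n(B)$ and $v = (v_1,\dots,v_n)^T \in \C^n$, and observe that the scalar $v^* y v$ equals $\sum_{i,j} \bar v_i v_j\, y_{ij} \in B$, since the complex numbers $\bar v_i, v_j$ commute with the algebra elements and factor out. The direction $(2) \Rightarrow (1)$ I expect to be a direct computation: by linearity of $f_1$ the right-hand side of $(1)$ becomes $f_1(v^* y v) = \sum_{i,j} \bar v_i v_j\, f_1(y_{ij})$, while substituting $f_n = M_n(f_1)$ into the left-hand side gives $v^*(f_n(y))v = \sum_{i,j} \bar v_i v_j\, f_1(y_{ij})$ as well, so the two sides agree.

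For the converse $(1) \Rightarrow (2)$, I would fix $y \in M_n(B)$ and set $c_{ij} = (f_n(y))_{ij} - f_1(y_{ij}) \in A$, where $(f_n(y))_{ij}$ denotes the $(i,j)$ entry of $f_n(y)$. The goal is to show that every $c_{ij}$ vanishes, since this is exactly the assertion $f_n(y) = M_n(f_1)(y)$. Expanding both sides of $(1)$ as above, condition $(1)$ is equivalent to $\sum_{i,j} \bar v_i v_j\, c_{ij} = 0$ for every $v \in \C^n$. I would then recover the individual coefficients by the standard polarization trick over $\C$: taking $v = e_k$ (the $k$-th standard basis vector) yields $c_{kk} = 0$; taking $v = e_k + e_l$ then gives $c_{kl} + c_{lk} = 0$; and taking $v = e_k + i\,e_l$ gives $i\,c_{kl} - i\,c_{lk} = 0$, i.e.\ $c_{kl} = c_{lk}$. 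Combining the last two identities forces $c_{kl} = 0$ for all $k,l$. As $y$ was arbitrary, this yields $f_n = M_n(f_1)$.

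The argument is entirely finite-dimensional linear algebra, so I do not anticipate a serious obstacle; the computation for $n = 2$ (the case actually needed to prove fullness in Theorem~\ref{thm:CPNUff}) is already representative. The one point requiring care is that the coefficients $c_{ij}$ are elements of the possibly noncommutative algebra $A$ rather than scalars. This causes no difficulty, however, because the entries $\bar v_i$ and $v_j$ are complex numbers, which commute with everything and may be pulled out freely, so the classical polarization identity applies verbatim with $A$-valued coefficients. It is worth noting that positivity of $f_1$ and $f_n$ is never used: only the linearity of $f_1$ enters the proof, so the equivalence in fact holds for arbitrary linear maps, with positivity merely fixing the setting in which the lemma will be applied.
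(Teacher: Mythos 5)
Your proof is correct and takes essentially the same route as the paper: the paper defers the proof to its reference and notes that it uses stabilizer states in $\CC^n$, and the polarization vectors $e_k$, $e_k+e_l$, $e_k+i\,e_l$ you evaluate against are exactly those stabilizer states up to normalization, used for the same purpose of isolating the individual entries $c_{kl}$. Your closing observation that only linearity of $f_1$ (not positivity) is used is also accurate.
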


\ \\
The proof of the lemma~\cite{rennela-staton-mfps31} makes use of stabilizer states in $\CC^n$. 

\newcommand{\completely}[1]{#1_{\mathrm{C}}}
\newcommand{\catC}[1]{\completely{\mathbf{#1}}}
\hide{ An immediate consequence of Theorem~\ref{thm:CPNUff} is that we can vary the index category.
Whenever there is an identity-on-objects functor 
$F:\Mat\to \mathbb D$ and 
a functor $N:\wCPNU\times \mathbb D\to\wPNU$ such that 
$M=N(\mathrm{Id}\times F)$, we have 
that the transpose of $N$ 
\[
\wCPNU\to[\mathbb D,\wPNU]
\]
is full and faithful.

Let $F:\wCPNU\times \opp\NatCP\to\wPNU$ be the functor 
which coincides with the functor $M$ on objects, but takes a pair of morphisms $(f,g)$ to $g^*\otimes f$. The fact that the 
dual of a CP-map between matrix algebras is again CP induces the full and faithfulness of the functor $F: \wCPNU \times \opp\NatCP\to\wPNU$.\\

Let $\NatCP$ be the category whose objects are natural numbers and where
a morphism $m\to n$ is a completely positive map $M_m\to M_n$. In the literature, this category is has been called $\cat{CPMs}$ \cite{malherbe-scott-selinger,cho-qpl14-extended} , $\cat{W}$ \cite{selinger-qpl} or $\cat{CPM}[\cat{FdHilb}]$ \cite{selinger-cpm}.

We will be quite general about the base category. 
Consider a subcategory $\cat V$ of $\PNU$ that is
closed under matrix algebras, i.e.
\begin{equation}
\CC\in\cat V\quad\text{and }\quad A\in\cat V\implies M_n(A)\in\cat V\text.
\label{eqn:closed-under-matrices}
\end{equation}
Then define $\catC V$ to be the closure 
of $\cat V$ under matrices of morphisms:
the objects of $\catC V$ 
are the same as the objects of $\cat V$, and a function $f:A\to B$ is in $\catC V$ 
if $M_n(f):M_n(A)\to M_n(B)$ is in $\cat V$ for all $n$.
For instance, $\completely {(\PNU)}=\CPNU$.

\begin{theorem}[\cite{rennela-staton-mfps31}]
\label{thm:cp-nat-basecat}
Consider a subcategory $\cat{V}$ of $\PNU$ 
that is closed under matrix algebras~\eqref{eqn:closed-under-matrices} and such that the matrices functor
\[\catC V\times \NatCP\to\PNU
\]
factors through $\cat V$. 
It induces a full and faithful functor $\catC {V} \to [\NatCP,\cat{V}]$. 
\end{theorem}
}
\section{Main result: W*-algebras are colimits of CP-maps}
\label{sec:wstar-colim}
\newcommand{\NPLF}{\ensuremath{\mathrm{NPLF}}}

This section gives our main contribution: we show that infinite dimensional W*-algebras are canonical colimits of matrix algebras.\\

Our first result is based on the representation of W*-algebras by their cones of positive 
linear functionals. 
We say that an (abstract) cone is a module for the semiring of
positive reals. Thus it is a set $X$ that is equipped with 
both the structure of a commutative monoid $(X,+,0)$
and a function $(-\cdot-):\Rpnz\times X\to X$ that is a group
homomorphism in each argument and
such that $rs\cdot x=r\cdot(s\cdot x)$, $1\cdot x=x$.
Most examples of cones arise as subsets of a larger vector space that are not sub\emph{spaces} per se
but merely closed under addition and multiplication by positive scalars. 
For example, the set of positive reals itself forms a cone. 
The positive elements of a C*-algebra also form a cone.\\

Let $\Cone$ be the category of cones and structure preserving functions between them.

For any W*-algebras $A$ and $B$, the set of 
normal positive maps $A\to B$ forms a cone: it is closed under addition, zero, and 
multiplication by positive scalars. (Formally, we can say that $\wCPNU$ is enriched in the category $\Cone$, equipped with 
the usual symmetric monoidal structure: composition is a cone-homomorphism in each argument. This also plays a role in~\cite{pagani-selinger-valiron-popl14}.)
In particular we have a functor
$\wPNU(-,\CC):\opp{\wPNU}\to \Cone$.\\

\begin{proposition}
\label{prop:NPLF-ff}
The normal positive linear functional functor $\wPNU(-,\CC): \opp{\wPNU} \to \Cone$ is full and faithful.
\end{proposition}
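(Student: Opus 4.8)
The plan is to identify, for a W*-algebra $A$, the cone $\wPNU(A,\CC)$ of normal positive functionals with the positive cone $(A_*)^+$ of the predual $A_*$, and then to exploit the duality $A\cong(A_*)^*$. Faithfulness is the easy half. The predual $A_*$ is the complex-linear span of its positive cone---every normal functional is a linear combination of normal positive ones, by the Jordan decomposition of normal functionals---and $A_*$ separates the points of $A=(A_*)^*$. Hence if two normal positive maps $h_1,h_2:B\to A$ satisfy $\phi\circ h_1=\phi\circ h_2$ for every $\phi\in\wPNU(A,\CC)$, then $\omega(h_1(b))=\omega(h_2(b))$ for all $\omega\in A_*$ and all $b\in B$, so $h_1=h_2$. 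This is exactly faithfulness of $F:=\wPNU(-,\CC)$ on $\opp\wPNU$, recalling that $\opp\wPNU(A,B)=\wPNU(B,A)$.

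For fullness, fix W*-algebras $A,B$ and a cone homomorphism $\Phi:\wPNU(A,\CC)\to\wPNU(B,\CC)$, that is $\Phi:(A_*)^+\to(B_*)^+$; I must produce a normal positive map $h:B\to A$ with $\phi\circ h=\Phi(\phi)$ for all $\phi$. Since $\Phi$ preserves addition and positive-scalar multiplication, it extends to a real-linear map on the self-adjoint predual $\sa{(A_*)}=(A_*)^+-(A_*)^+$ by $\Phi(\omega_1-\omega_2)=\Phi(\omega_1)-\Phi(\omega_2)$; additivity of $\Phi$ makes this well defined, and complexifying yields a linear map $\Phi_*:A_*\to B_*$ between preduals.

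The crux is to show that $\Phi_*$ is bounded, for then its Banach-space adjoint is a map $h:=(\Phi_*)^*:B\to A$ between the dual spaces $B=(B_*)^*$ and $A=(A_*)^*$. Boundedness follows from general order-theoretic functional analysis: $\sa{(A_*)}$ is a real Banach space with closed generating cone $(A_*)^+$, whereas the cone of $\sa{(B_*)}$ is normal (both self-adjoint preduals are base-normed spaces), and a positive linear map from a Banach space with closed generating cone into an ordered Banach space with normal cone is automatically continuous. I expect this boundedness step to be the main obstacle, since the bare cone structure of $\Phi$ carries no a priori norm control; every other step is formal.

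Granting boundedness, $h=(\Phi_*)^*$ is normal precisely because it is the adjoint of a map between preduals, equivalently $\sigma$-weakly continuous. It is positive because for $b\in B^+$ and $\phi\in\wPNU(A,\CC)$ one computes $\phi(h(b))=\langle b,\Phi_*(\phi)\rangle=\Phi(\phi)(b)\geq 0$, using that $\Phi(\phi)$ is a normal positive functional; since normal positive functionals detect positivity in $A$, this gives $h(b)\in A^+$. The very same computation shows $\phi\circ h=\Phi(\phi)$ for every $\phi$, so $F(h)=\Phi$, establishing fullness and completing the proof.
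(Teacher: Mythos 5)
Your argument is correct and rests on the same two pillars as the paper's own proof: the Jordan decomposition of the Hermitian part of the predual (so that cone maps extend uniquely to linear maps) and the automatic continuity of positive maps, combined with the duality $A\cong (A_*)^*$. The packaging differs in a way worth noting. The paper only ever uses the \emph{scalar} form of the continuity theorem --- every positive linear functional on a Banach space ordered by a closed generating cone is bounded (Schaefer, Th.~V.5.5(ii)) --- applying it to identify positive linear functionals on the Hermitian part of the predual with positive elements of the algebra; it then defines the desired map of W*-algebras pointwise on positive elements by precomposition with the given cone map, extends linearly via a separate full-and-faithfulness lemma for the positive-cone functor on directed ordered vector spaces, and must check weak-* continuity at the very end (via Schaefer IV.2.2) to obtain normality. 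You instead linearize the cone map at the level of preduals first and invoke the \emph{operator-valued} continuity theorem (a positive map from a Banach space with closed generating cone into an ordered normed space with normal cone is bounded; normality of the predual cone comes from the base-norm structure), after which normality of $h=(\Phi_*)^*$ is automatic because $h$ is by construction the adjoint of a bounded map of preduals. Your route needs a marginally stronger input but gets normality for free; the paper's route uses the weaker scalar input but pays with an extra continuity verification. One step you pass over quickly: to deduce $h(b)\in A^+$ from $\phi(h(b))\ge 0$ for all normal positive $\phi$, you should first observe that $h(b)$ is self-adjoint (because $\Phi_*$ maps Hermitian functionals to Hermitian functionals) and that $A^+$ is the dual cone of $(A_*)^+$; both facts are standard and do not affect correctness.
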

\begin{proof}
Fullness essentially comes from the fact that the closedness and completeness of the positive cone imply that every positive linear map $A_* \rightarrow \C$ is bounded (see e.g.~\cite{namioka} or \cite[Th. V.5.5(ii)]{schaefer-top-vect}). We refer the interested reader to Appendix \ref{appendix:proof-NPLF-ff} for a detailed proof.
\end{proof}

We define a category $\NatCP$ as a full subcategory of $\wCPNU$ whose objects are 
W*-algebras of the form $M_n$. 
We consider the functor $\opp\wCPNU\to [\NatCP,\Set]$ 
which takes a W*-algebra $A$ to the functor $M_n\mapsto \wCPNU(A,M_n)$. 

\hide{
\begin{corollary}\label{cor:homcone}
The hom-cone functor $\wCPNU(-,=):\opp{\wCPNU} \to [\NatCP,\Cone]$ is full and faithful.
\end{corollary}
\begin{proof}
By combining Prop.~\ref{prop:NPLF-ff} with Theorem~\ref{thm:cp-nat-basecat}, 
we have that the composite 
\[
H:\opp{\wCPNU}\to [\NatCP,\wCPNU]\to [\NatCP,\Cone]
\]
is full and faithful. 
Note that $H(A)(n)=\wPNU(M_n(A),\CC))$. 
Now, the positive functionals $M_n(A)\to \CC$ are in bijective correspondence with 
completely positive maps $A\to M_n$ (e.g.~\cite{stormer}). In fact this correspondence extends to an
isomorphism of functors, $H(A)\cong \wCPNU(A,M_{-}):\NatCP\to \Cone$. 
\end{proof}
}

\begin{theorem}
\label{thm:main-theorem}
The hom functor $\wCPNU(-,=): \opp{\wCPNU} \to [\NatCP, \cat{Set}]$ is full and faithful.
\end{theorem}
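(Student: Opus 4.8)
The plan is to factor the functor through the cone-enriched hom functor and then argue that the extra cone structure comes for free. Writing $U\colon\Cone\to\Set$ for the forgetful functor, the functor $\wCPNU(-,=)\colon\opp{\wCPNU}\to[\NatCP,\Set]$ is the composite of the hom-cone functor $\opp{\wCPNU}\to[\NatCP,\Cone]$, $A\mapsto\wCPNU(A,M_-)$, with postcomposition by $U$. I would first record that the hom-cone functor is full and faithful: this follows by combining Theorem~\ref{thm:CPNUff} and Proposition~\ref{prop:NPLF-ff} with the standard bijection $\wCPNU(A,M_n)\cong\wPNU(M_n(A),\C)$ between completely positive maps $A\to M_n$ and normal positive functionals on $M_n(A)$, which is an isomorphism of cones, natural in both $A$ and $n$. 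Since $U$ is faithful but not full, postcomposition by $U$ is faithful but not full; hence faithfulness of $\wCPNU(-,=)$ is immediate, and the entire content of the theorem is its \emph{fullness} — equivalently, the statement that every natural transformation of the underlying $\Set$-valued presheaves automatically respects the cone structure.

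For faithfulness I would argue directly: if two normal completely positive maps $\phi,\psi\colon B\to A$ induce the same natural transformation, then $s\circ\phi=s\circ\psi$ for every normal state $s\colon A\to\C$ (the component at $M_1=\C$), and since normal states separate the points of a W*-algebra, $\phi=\psi$.

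The core of the proof is fullness. Given a natural transformation $\tau\colon\wCPNU(A,M_-)\Rightarrow\wCPNU(B,M_-)$ in $[\NatCP,\Set]$, I would show that each component $\tau_n\colon\wCPNU(A,M_n)\to\wCPNU(B,M_n)$ is a homomorphism of cones, so that $\tau$ lifts to $[\NatCP,\Cone]$ and fullness of the hom-cone functor supplies the required completely positive map. The point is that the cone operations on the hom-sets are all induced by morphisms of $\NatCP$, i.e. by completely positive maps between matrix algebras, so naturality of $\tau$ forces them to be preserved. Positive scaling by $r$ is conjugation by $\sqrt r\,I$, the map $m_r\colon M_n\to M_n$, $X\mapsto rX$, so $rf=m_r\circ f$ and $\tau_n(rf)=m_r\circ\tau_n(f)=r\,\tau_n(f)$; the zero element is handled by the zero completely positive map. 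Addition is the one genuinely nontrivial operation: given $f,g\colon A\to M_n$, I form the block-diagonal map $h=\mathrm{diag}(f,g)\colon A\to M_{2n}$, which is completely positive, together with the corner projections $p_1,p_2\colon M_{2n}\to M_n$ (compressions to the two diagonal blocks), which are completely positive and hence morphisms of $\NatCP$. Then $f=p_1\circ h$, $g=p_2\circ h$ and $f+g=(p_1+p_2)\circ h$; applying $\tau$ and using naturality at $p_1$, $p_2$ and $p_1+p_2$ (all with common source $h$, so that the single value $H:=\tau_{2n}(h)$ controls all three) gives $\tau_n(f+g)=(p_1+p_2)\circ H=p_1\circ H+p_2\circ H=\tau_n(f)+\tau_n(g)$.

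The main obstacle is this last step, and it has two sides. First, one must set up the hom-cone functor and in particular verify that the completely-positive/positive-functional bijection is natural and compatible with the cone structure in both variables; this is the technical backbone that lets Theorem~\ref{thm:CPNUff} and Proposition~\ref{prop:NPLF-ff} be combined, and it is where the operator-algebraic input (a Stinespring-type representation) enters. Second, and conceptually the key idea, is the realization of addition of hom-set elements as a single index-category morphism applied to a common source: addition is not part of the presheaf structure a priori, and the block-diagonal construction is exactly what exhibits it inside that structure so that naturality can bite. By contrast, faithfulness, scaling and the zero element are routine.
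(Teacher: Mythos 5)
Your proposal is correct and follows essentially the same route as the paper: reduce to fullness of the cone-valued hom functor via Theorem~\ref{thm:CPNUff}, Proposition~\ref{prop:NPLF-ff} and the natural bijection between states on $M_n(A)$ and completely positive maps $A\to M_n$, then show that naturality in $\NatCP$ forces each component to be a cone homomorphism, with scaling handled by the map $X\mapsto rX$ and addition by passing through $M_{2n}$. Your treatment of addition via the corner compressions $p_1,p_2$ and the single morphism $p_1+p_2$ applied to $H=\tau_{2n}(\mathrm{diag}(f,g))$ is a mild (and arguably cleaner) repackaging of the paper's argument with the idempotent $j$ and the map $t(\begin{smallmatrix}a&b\\c&d\end{smallmatrix})=a+d$, not a genuinely different method.
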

\begin{proof}
By combining Prop.~\ref{prop:NPLF-ff} with Theorem~\ref{thm:CPNUff},
we have that the composite 
\[
\wCPNU(-\otimes =,\CC):\opp{\wCPNU}\to [\opp{\Mat},\opp{\wPNU}]\to [\opp\Mat,\Cone]
\]
is full and faithful. 
Our first step is to show that the hom-cone functor
\[\wCPNU(-,=):\opp{\wCPNU}\to[\NatCP,\Cone]\]
is full and faithful.
Indeed, by elementary category theory,
for any functor
\[H:\opp\wCPNU\times \mathbb D\to\Cone\] 
if there is an
identity-on-objects functor 
$F:\opp\Mat\to \mathbb D$ and 
and a family of isomorphisms
\begin{equation}\wCPNU(M_n(A),\CC)\cong H(A,F(n))
\qquad\text{\ natural in $n\in \opp\Mat$ and $A\in\wCPNU$}
\label{eqn:nat-iso}
\end{equation}
then the transpose of $H$ 
\[
\opp{\wCPNU}\to[\mathbb D,\Cone]
\]
is full and faithful.

In particular, let $H$ be the restricted hom-functor
$H:\opp\wCPNU\times \NatCP \to\Cone$.
For $F:\opp\Mat\to\NatCP$, 
we first note that for any matrix $V:m\to n$ in $\Mat$ 
we have a completely positive map $V^*(-)V$, with reference to Choi's theorem \cite{choi}. 
To turn this into a contravariant functor 
$F:\opp{\Mat}\to\NatCP$,
we note that $\Mat$ is self-dual
with the isomorphism $\opp\Mat\to\Mat$ taking a matrix $V$ to its transpose $V^\top$.
We let $F(V)=(V^\top)^*(-)V^\top$.
The natural isomorphism~\eqref{eqn:nat-iso}
is now the standard bijection between 
states $M_n(A)\to \CC$ and completely positive maps $A\to M_n$
(see e.g.~\cite{stormer}).
Thus we can conclude that the hom-cone functor 
$\opp{\wCPNU}\to[\NatCP,\Cone]$
is full and faithful.

It remains to show that the hom-\emph{set} functor 
\[\opp{\wCPNU}\to[\NatCP,\Set]\]
is full and faithful.
We must show that if a family of functions
$\phi_n:\wPNU(B,M_n)\to \wPNU(A,M_n)$ between 
hom-sets is natural in $n\in\NatCP$ then each $\phi_n$ is necessarily a cone homomorphism, 
i.e.~that
$\phi_n(\lambda.f)=\lambda.\phi_n(f)$
and 
$\phi_n(f+g)=\phi_n(f)+\phi_n(g)$. 
The first fact, $\phi_n(\lambda.f)=\lambda.\phi_n(f)$, comes immediately from naturality with respect to the CP-map
$M_n\to M_n$ given by scalar multiplication with the scalar $\lambda$. 
For the second fact, $\phi_n(f+g)=\phi_n(f)+\phi_n(g)$, we use a characterization of pairs of maps $A\to M_n$.
Let $j:M_{2n}\to M_{2n}$ be the idempotent completely positive map
$j(\begin{smallmatrix}a&b\\c&d\end{smallmatrix})=(\begin{smallmatrix}a&0\\0&d\end{smallmatrix})$.
We have a bijection
\begin{equation}
\wCPNU(A,M_n)\times \wCPNU(A,M_n)\ \cong\ \{h\in\wCPNU(A, M_{2n})~|~ h=j\cdot h\}\text.
\label{eqn:bij}
\end{equation}
This bijection takes a pair of maps 
$f,g:A\to M_n$ 
to the map $h:A\to M_{2n}$ with 
$h(a)=(\begin{smallmatrix}f(a)&0\\0&g(a)\end{smallmatrix})$. 
Under the bijection \eqref{eqn:bij}, we can understand addition in the cone $\wCPNU(A,M_n)$ as composition with the
CP map $t:M_{2n}\to M_n$ given by
$t(\begin{smallmatrix}a&b\\c&d\end{smallmatrix})=a+d$,
and so, since~$\phi$ is natural with respect to~$t$, the addition structure of the cone is preserved by each $\phi_n$.

Here is a higher level account of the previous paragraph. 
Let $\FDCP$ be the category of all finite dimensional C*-algebras and completely positive maps between them.
We have an equivalence of categories $[\NatCP, \cat{Set}]\simeq [\FDCP,\cat{Set}]$,
in other words, the Karoubi envelope of $\NatCP$ contains $\FDCP$ (e.g.~\cite{selinger-karoubi,heunen-kissinger-selinger}).
Now $\FDCP$ has a full subcategory $\CFDCP$, the \emph{commutative} finite dimensional C*-algebras and completely positive maps between them. 
In fact, this category $\CFDCP$ of commutative C*-algebras is equivalent to the Lawvere theory for abstract cones (c.f.~\cite[Prop.~4.3]{furber-jacobs}), 
so the category $\Cone$ of cones is a full subcategory of the functor category $[\CFDCP,\Set]$.
So natural maps in $[\NatCP,\cat{Set}]$ are, 
in particular, cone homomorphisms.\end{proof}

As discussed in the introduction, this theorem means that a W*-algebra can be understood as the canonical colimit of a diagram of matrix algebras and completely positive maps. At this point, it is important to stress that our result is about colimits in the opposite category $\opp{\wCPNU}$ of W*-algebras and completely positive maps, and therefore about limits in the category $\wCPNU$ of W*-algebras and completely positive maps.

\section{Some remarks on topological vector spaces}
\label{sec:cp-tvect}
It is natural to wonder whether we can abstract from the setting of the theory of W*-algebras and evolve to the larger scope of the theory of topological vector spaces.\\

Recall that a topological vector space $X$ over a topological field $K$ is a vector space whose addition $X^2 \to X$ and scalar multiplication $K \times X \to X$ are continuous with respect to the topology of $K$. All Hilbert spaces and Banach spaces are examples of such topological vector spaces. And, for every natural number $k$ and every topological vector space $X$, the $k \times k$ matrices whose entries are in $X$ also form a topological vector space, $M_k(X)$, where the topology is the product topology on $X^{k^2}$. 

A continuous K-linear map $f: X\to Y$ between topological vector spaces over the field $K$ therefore always extends to a continuous map $M_n(f):M_n(X)\to M_n(Y)$ for every natural number $n \in \N$, essentially by viewing $M_n(f)$ as $f \times \cdots \times f$, with $n^2$ factors. 

The category of topological vector spaces over a given topological field $K$ is commonly denoted by $\cat{TVS}_K$ or $\cat{TVect}_K$, taking topological vector spaces over $K$ as objects and continuous K-linear maps as arrows. We will additionally restrict to topological vector spaces over $\C$ whose
bornology (i.e. ideal of bounded sets \cite[\S I.5]{schaefer-top-vect}) is definable by a norm, a property which is true for C*-algebras and W*-algebras.\\

Consider a subcategory $\cat{V}$ of $\cat{TVect}_\C$ closed under matrix algebras, i.e. satisfying
\begin{equation}
\CC\in\cat V\quad\text{and }\quad A\in\cat V\implies M_n(A)\in\cat V\text.
\label{eqn:closed-under-matrices}
\end{equation}
We will call $\catC V$ the closure of the category $\cat{V}$ under matrices of morphisms. Then, one obtains the following theorem  analogous to Theorem \ref{thm:CPNUff}, in line with \cite{rennela-staton-mfps31}.\\

\begin{theorem}
Consider a subcategory $\cat{V}$ of $\cat{TVect}_\C$ 
satisfying~\eqref{eqn:closed-under-matrices} and such that the matrices functor
\[\catC V\times \Mat\to\cat{TVect}_\C
\]
factors through $\cat V$. 
It induces a full and faithful functor $\catC {V} \to [\Mat,\cat{V}]$.\\ 
\end{theorem}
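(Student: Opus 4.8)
The plan is to mimic the proof of Theorem~\ref{thm:CPNUff} almost verbatim, replacing every appeal to positivity by pure $\C$-linearity. First I would make the matrices functor explicit. Currying the functor $\catC V\times\Mat\to\cat{TVect}_\C$, which lands in $\cat V$ by the factorization hypothesis, gives a functor $M:\catC V\to[\Mat,\cat V]$. It sends a space $A$ to the diagram $M(A)=\{M_n(A)\}_n$, where a matrix $F:m\to n$ acts by $M(A)(F)=F^{*}(-)F:M_m(A)\to M_n(A)$ (here $F^{*}$ is the fixed complex matrix acting by multiplication, so this map is genuinely $\C$-linear and continuous), and it sends a morphism $f:A\to B$ to the family $M(f)=\{M_n(f)\}_n$ obtained by applying $f$ entrywise; on a pair $(f,F)$ the underlying functor is $F^{*}(f\_)F$. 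Naturality of $M(f)$ is immediate, since $F^{*}(-)F$ only forms $\C$-linear combinations of matrix entries and $f$ is $\C$-linear, so $F^{*}(M_m(f)(x))F = M_n(f)(F^{*}xF)$. Faithfulness is then trivial, because $M_1(A)=A$ and $M_1(f)=f$, so $M(f)=M(g)$ forces $f=g$.

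For fullness, given a natural transformation $\alpha:M(A)\Rightarrow M(B)$ in $[\Mat,\cat V]$, I would set $f:=\alpha_1:A\to B$ and claim $\alpha=M(f)$. The key input is naturality with respect to the column vectors $v:n\to 1$ in $\Mat$: the naturality square for $v$ reads $\alpha_1\circ M(A)(v)=M(B)(v)\circ\alpha_n$, that is
\[
\alpha_1(v^{*}xv)=v^{*}\alpha_n(x)v
\qquad(x\in M_n(A),\ v\in\CC^n).
\]
This is exactly condition~(1) of Lemma~\ref{cp-m2-lemma}, but now for merely continuous linear maps, and I would prove the linear analogue of that lemma --- that this condition forces $\alpha_n=M_n(\alpha_1)$ --- by polarization rather than by the positivity argument used there. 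Concretely, writing $z_{ij}=\alpha_n(x)_{ij}-\alpha_1(x_{ij})\in B$, linearity of $\alpha_1$ turns the displayed identity into $\sum_{i,j}\bar v_i v_j\, z_{ij}=0$ for every $v\in\CC^n$; testing against $v=e_k$, $v=e_k+e_l$ and $v=e_k+ie_l$ successively yields $z_{kk}=0$, $z_{kl}+z_{lk}=0$ and $z_{kl}-z_{lk}=0$, hence $z_{ij}=0$ for all $i,j$. Thus $\alpha_n=M_n(\alpha_1)=M_n(f)$ entrywise. Since each $\alpha_n$ is a morphism of $\cat V$, this shows $M_n(f)\in\cat V$ for all $n$, so $f$ is a morphism of $\catC V$ (with continuity coming for free from $\alpha_1\in\cat V$), and $M(f)=\alpha$.

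I expect the only real content to be this polarization step standing in for Lemma~\ref{cp-m2-lemma}; everything else is bookkeeping about functoriality. The main thing to watch is that the argument must never invoke an order or a $*$-structure on the spaces of $\cat V$ --- only the $\C$-vector-space structure, the product topology on matrices, and the hypothesis that $F^{*}(-)F$ factors through $\cat V$ --- so that the proof applies to an arbitrary subcategory $\cat V\subseteq\cat{TVect}_\C$ satisfying~\eqref{eqn:closed-under-matrices}. In particular the sesquilinear-form vanishing argument is valid with coefficients $z_{ij}$ in the \emph{target vector space} $B$ rather than in $\C$, which is precisely what lets us dispense with positivity.
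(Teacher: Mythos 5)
Your proof is correct and follows exactly the route the paper intends: the paper states this theorem without proof, as ``analogous to Theorem~\ref{thm:CPNUff}'', and your argument is that analogy carried out in detail, with faithfulness via $M_1(f)=f$ and fullness via naturality against the column vectors $v:n\to 1$. Your polarization step (testing $v=e_k$, $e_k+e_l$, $e_k+ie_l$) is precisely the linear-algebraic content of the stabilizer-state argument behind Lemma~\ref{cp-m2-lemma}, and you correctly observe that it uses only $\C$-linearity of $\alpha_1$ and of the entries of $F^{*}(-)F$, so it survives the passage from positive maps to arbitrary continuous linear maps.
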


From there we can build representations for some of the categories of topological vector spaces introduced in the literature \cite{alfsen-shultz,furber-thesis,schaefer-top-vect}. First, we will recall some definitions and then state the representation theorem associated to them.\\

\newcommand{\Banach}{\cat{Banach}}
\newcommand{\OpSp}{\cat{OpSpace}}
\newcommand{\OpSys}{\cat{OpSystem}}
\newcommand{\WkOpSys}{\cat{Wk}\OpSys}
\newcommand{\OUS}{\cat{OUS}}
\newcommand{\WkOUS}{\cat{Wk}\OUS}
\newcommand{\Predual}{\cat{Predual}}
\newcommand{\BNS}{\cat{BNS}}

A Banach space is a complete normed vector space. A (concrete) operator space is a closed subspace of a C*-algebra, or alternatively a Banach space given together with an isometric embedding into the space of all bounded operators on some Hilbert space $H$ \cite{pisier-intro-opsp}. We define $\Banach$ to be the category of Banach spaces and bounded maps, i.e. linear maps $\varphi$ such that $\exists k. \Vert\varphi (x)\Vert \leq k \cdot \Vert x \Vert$, and $\OpSp$ to be the category of operator spaces and completely bounded maps \cite{paulsen-CP-book}, i.e. linear maps $\varphi$ such that the norm $\Vert \varphi \Vert_\text{cb} := \sup_n \Vert M_n(\varphi)\Vert$ is finite, i.e. $\Vert M_n(\varphi)\Vert$ is bounded by a constant which does not depend on $n$. Alternatively, one could consider contractive maps, i.e. linear maps $\varphi$ such that $\Vert \varphi \Vert \leq 1$, and completely contractive maps, i.e. linear maps $\varphi$ such that $\Vert M_n(\varphi)\Vert \leq 1$ for every natural number $n$. Then, the category $\OpSp$ of operator spaces is a full subcategory of the category $\Banach_\text{CP}$ of Banach spaces and completely bounded maps.\\

With those definitions in mind, we can simply explain why we needed to make an additional requirement on the structure of the topological vector spaces that we are considering. In short, complete boundedness is not formalizable in terms of topology. In detail, an abstract operator space is a compatible choice of norms on $E$, $M_2(E)$, $M_3(E)$, $\cdots$ for a Banach space $E$. No matter which norm we choose, the topology is always the same, so it is always the case that $M_n(f)$ is continuous, and therefore bounded. Then, an operator space structure is a \textit{choice} of norm generating the bornology for each $n$, and completely bounded maps are those for which the norm of the maps $M_n(f)$ is bounded by some constant not depending on $n$. In the case where $E$ is equipped with a C$^*$-algebra structure, we take the norms to be the C$^*$-norms of each $M_n(E)$, and this defines what the norms are uniquely in the case of a concrete operator space. In the case of completely contractive maps the norm bound is automatically independent of $n$ (it is $1$) but one still needs a choice of norm on each $M_n(E)$ and therefore cannot stick to topology alone.\\

An order-unit space $(E,E_+,u)$ is an ordered vector space $(E,E_+)$ equipped with a strong Archimedian unit $u \in E_+$ \cite[Def. 1.12]{alfsen-shultz}. An operator system is an involutive vector space $V$ such that the vector space $M_n(V)$ of $n$-by-$n$ matrices whose entries are in $V$ is an order-unit space, or alternatively a closed subspace of a unital C*-algebra which contains $1$. We define $\OUS$ to be the category of order-unit spaces and unit-preserving positive maps between them, and $\OpSys$ to be the category of operator systems and unit-preserving completely positive maps between them. Then the category $\OpSys$ of operator systems is a full subcategory of the category $\OUS_\text{UCP}$ of order-unit spaces and unit-preserving completely positive maps.\\

\begin{theorem}
The following matrix functors $M$, taking a topological vector space $X$ to a functor $M(X): n \mapsto M_n(X)$, are full and faithful 
$$M:\OpSp \to [\Mat, \Banach] \qquad \qquad M:\OpSys \to [\Mat, \OUS]$$
\end{theorem}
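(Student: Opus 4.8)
The plan is to read both functors as instances of the matrix-functor theorem just proved, so that nearly all of the work is already done and only the identification of the relevant categories remains. In each case I would exhibit the functor as the restriction, to a full subcategory, of the canonical full and faithful functor $\catC{\cat V}\to[\Mat,\cat V]$ supplied by that theorem, for a suitable choice of $\cat V\subseteq\cat{TVect}_\C$: namely $\cat V=\Banach$ for the first functor and $\cat V=\OUS$ for the second.

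First I would treat $M\colon\OpSp\to[\Mat,\Banach]$ by specialising the preceding theorem to $\cat V=\Banach$. The two hypotheses are routine to check: $\Banach$ is closed under matrix algebras, since $M_n(X)$ is again a Banach space (its bornology is still definable by a norm), and the matrices functor factors through $\Banach$, since each structure map $V^*(-)V\colon M_m(X)\to M_n(X)$ induced by a matrix $V\colon m\to n$ is bounded. The theorem then delivers a full and faithful functor $\catC{\Banach}\to[\Mat,\Banach]$. As recalled above, $\OpSp$ sits as a full subcategory inside $\catC{\Banach}$, the Banach spaces with completely bounded maps; since the restriction of a full and faithful functor to a full subcategory is again full and faithful, the functor $M\colon\OpSp\to[\Mat,\Banach]$ is full and faithful.

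The second functor $M\colon\OpSys\to[\Mat,\OUS]$ would be handled identically, now with $\cat V=\OUS$. Here closure under matrix algebras is essentially the defining property of an operator system, namely that each $M_n(V)$ is an order-unit space, and the matrices functor factors through $\OUS$ because the unital maps $V^*(-)V$ preserve positivity and the unit. The preceding theorem yields a full and faithful functor $\catC{\OUS}\to[\Mat,\OUS]$, and restricting to the full subcategory $\OpSys$ of operator systems and unit-preserving completely positive maps inside $\OUS_{\mathrm{UCP}}=\catC{\OUS}$ finishes this case. In both cases faithfulness is immediate, as the component at $n=1$ of $M(f)$ recovers $f$, while fullness is the genuine content inherited from the theorem: naturality with respect to all of $\Mat$ forces any natural family $\eta$ to satisfy $\eta_n=M_n(\eta_1)$, exactly as in Lemma~\ref{cp-m2-lemma}.

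I expect the main obstacle to lie not in the category-theoretic bookkeeping but in the two verifications that feed the general theorem and, more delicately, in matching the morphism classes precisely. The subtle point is the one stressed throughout this section: completely bounded maps (and completely positive maps) are defined by a condition on a \emph{chosen} coherent family of matrix norms (respectively matrix orders), data that the underlying topology does not see. So the real work is to check that each amplified structure map $V^*(-)V$ respects the chosen norms or orders level by level, and that the image of $M$ consists exactly of the natural families, which the amplification identity $\eta_n=M_n(\eta_1)$ then pins down as the desired full subcategory. Once these are in place, the full-and-faithfulness is a formal consequence of the preceding theorem.
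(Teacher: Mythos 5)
Your proposal matches the paper's (implicit) argument exactly: the paper offers no separate proof of this theorem, intending it to follow by specialising the preceding general theorem to $\cat V=\Banach$ and $\cat V=\OUS$ and then restricting the resulting full and faithful functor $\catC{V}\to[\Mat,\cat V]$ to the full subcategories $\OpSp\subseteq\Banach_{\mathrm{CP}}$ and $\OpSys\subseteq\OUS_{\mathrm{UCP}}$, which is precisely what you do. The one point you flag but defer --- that a natural family in $[\Mat,\Banach]$ only gives levelwise boundedness of each $M_n(\eta_1)$, not the uniform bound $\sup_n\Vert M_n(\eta_1)\Vert<\infty$ required for complete boundedness (contrast the completely contractive case, where the bound $1$ is automatic) --- is likewise left unaddressed by the paper, so your reconstruction is faithful to its level of detail.
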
 
 
\section{Further work}
The present work only covers one specific aspect of the approximation of infinite-dimensional operator algebras by finite-dimensional ones. Determining how our construction relates to other approaches, like Tobias Fritz' perspective on infinite-dimensional state spaces \cite{fritz-infinite}, is a topic worth investigating.\\

Moreover, Day's construction \cite{day-convolution} provides a canonical way to extend the tensor product of a base category, like $\NatCP$, to 
a tensor product of presheaves (see also \cite{malherbe-scott-selinger}). Thus one can define a tensor product $A \star B$ of two W*-algebras $A$ and $B$ as the unique extension of the standard tensor product of matrix algebras that preserves colimits of CP-maps in each argument. This seems related to the way that Grothendieck defined tensor products on the category of Banach spaces \cite{grothendieck-tensors},
by starting with the category of finite\-dimensional normed spaces 
and a tensor defined on there.\\

Finally, although it has been established that every vector space is a filtered colimit of finite dimensional spaces, it is unclear whether there is an analogous characterization for W*-algebras.

\paragraph*{Acknowledgements}
The authors would like to thank Ross Duncan, Tobias Fritz, Shane Mansfield, Isar Stubbe, Bram Westerbaan and Fabio Zanasi for helpful discussions, and the anonymous referees for their comments and suggestions. The research leading to these results has received funding from the European Research Council under the European Union's Seventh Framework Programme (FP7/2007 -- 2013) / ERC grant agreement n. 320571, and a Royal Society University Research Fellowship. Robert Furber was supported by Project 4181 -- 00360 from the Danish Council for Independent Research.

\bibliographystyle{eptcs}
\bibliography{qpl16-eptcs}

\begin{appendix}
\section{Proof of Proposition \ref{prop:NPLF-ff}}
\label{appendix:proof-NPLF-ff}
\newcommand{\auxproof}[1]{
\ifignore\mbox{}\newline
\textbf{PROOF:} \dotfill\newline
{\it #1}\mbox{}\newline
\textbf{ENDPROOF}\dotfill
\fi}

\newcommand{\cohom}{\ensuremath{H^m(\overline{Y}, \mathbf{Q}_\ell)}}
\newcommand{\codim}{\ensuremath{\mathrm{codim}}}
\newcommand{\gr}{\ensuremath{\mathrm{gr}}}
\newcommand{\Rgeq}{\ensuremath{\mathbb{R}_{\geq 0}}}
\newcommand{\Hil}{\ensuremath{\mathcal{H}}}
\newcommand{\Prob}{\ensuremath{\mathbb{P}}}
\newcommand{\E}{\ensuremath{\mathbb{E}}}
\newcommand{\Var}{\ensuremath{\mathrm{Var}}}
\newcommand{\powerset}{\ensuremath{\mathcal{P}}}

\newcommand{\WeakDual}{\ensuremath{\mathrm{WeakDual}}}
\newcommand{\Herm}{\ensuremath{\mathrm{Herm}}}

\newcommand{\Obj}{\ensuremath{\mathrm{Obj}}}
\newcommand{\Arr}{\ensuremath{\mathrm{Arr}}}

\newcommand{\WStarP}{\wPNU}
\newcommand{\PoVect}{\cat{PoVect}}
\newcommand{\BNSP}{\cat{BBNS}_{P}}

\newcommand{\blank}{\ensuremath{\mbox{-}}}
\newcommand{\sembrack}[1]{[\![#1]\!]}

\newcommand{\braket}[2]{\ensuremath{\langle #1 | #2 \rangle}}
\newcommand{\ketbra}[2]{\ensuremath{ | #1 \rangle \langle #2 | }}

We'll consider the functor $\NPLF : \WStarP \rightarrow \Cone$ that assigns to each W$^*$-algebra its cone of normal (equivalently ultraweakly continuous) positive linear functionals, essentially defined such that $\NPLF = \WStarP(\blank,\C)$. We want to show it is full, using the fact that the closedness of the positive cone implies that every positive linear map $A_* \rightarrow \C$ is bounded \cite[Th. V.5.5(ii)]{schaefer-top-vect}.\\

To make the proof smoother, we start from the observation that if we have a partially ordered vector space $E$, it has a positive cone $E_+$, and positive (equivalently monotone) maps of partially ordered vector spaces $f: E \rightarrow F$ restrict to cone maps $E_+ \rightarrow F_+$, defining a functor $\blank_+ : \PoVect \rightarrow \Cone$, where $\PoVect$ is partially ordered vector spaces that are generated by their positive cone (also known as directed partially ordered vector spaces because directedness in the usual sense is equivalent to this property).

\begin{proposition}
The functor $\blank_+ : \PoVect \rightarrow \Cone$ is full and faithful.
\end{proposition}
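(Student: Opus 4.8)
The plan is to handle faithfulness and fullness separately, with the defining hypothesis on the objects of $\PoVect$ — that every $E$ is generated by its positive cone, so that $E = E_+ - E_+$ — doing essentially all of the work.

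For faithfulness, I would take two positive maps $f,g : E \to F$ with $f_+ = g_+$, i.e.\ agreeing on $E_+$. Since $E$ is directed, any $x \in E$ admits a decomposition $x = a - b$ with $a,b \in E_+$, and then linearity gives $f(x) = f(a) - f(b) = g(a) - g(b) = g(x)$. Hence $f = g$, so $\blank_+$ is faithful.

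For fullness I would start from a cone homomorphism $\phi : E_+ \to F_+$ — a function preserving the additive monoid structure and the $\Rpnz$-action — and construct a positive linear map $\tilde\phi : E \to F$ restricting to $\phi$. The definition is forced by faithfulness: writing $x = a - b$ with $a,b \in E_+$, I set $\tilde\phi(x) = \phi(a) - \phi(b)$, the subtraction taking place in the ambient vector space $F$. The one step I would flag as the crux (rather than a genuine obstacle, since it is bookkeeping once the hypotheses are unwound) is \emph{well-definedness}, i.e.\ independence of the chosen decomposition. Here the standard move applies: if $a - b = a' - b'$ then $a + b' = a' + b$ in $E_+$, so applying $\phi$ and using additivity gives $\phi(a) + \phi(b') = \phi(a') + \phi(b)$, which rearranges in the vector space $F$ to $\phi(a) - \phi(b) = \phi(a') - \phi(b')$. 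Note this uses both that $\phi$ is additive and that $F$ is a vector space in which to subtract.

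With well-definedness secured, linearity is routine. Additivity follows by adding compatible decompositions of $x$ and $y$; homogeneity for $\lambda > 0$ follows from $\phi(\lambda a) = \lambda\phi(a)$; the case $\lambda = 0$ from $\phi(0) = 0$; and for $\lambda < 0$ one flips the decomposition to $\lambda x = (-\lambda) b - (-\lambda) a$ with $-\lambda > 0$, whence $\tilde\phi(\lambda x) = (-\lambda)\phi(b) - (-\lambda)\phi(a) = \lambda\,\tilde\phi(x)$. Finally, positivity and the equation $\blank_+(\tilde\phi) = \phi$ come from the same observation: for $x \in E_+$ the decomposition $x = x - 0$ gives $\tilde\phi(x) = \phi(x) \in F_+$. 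This exhibits $\tilde\phi$ as a preimage of $\phi$ under $\blank_+$, completing fullness.
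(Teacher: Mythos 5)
Your proposal is correct and follows essentially the same route as the paper's own proof: decompose $x = x_+ - x_-$ using directedness, define the extension by $\tilde\phi(x) = \phi(x_+) - \phi(x_-)$, verify well-definedness via $x_+ + y_- = y_+ + x_-$ and additivity of $\phi$, then check linearity case by case on the sign of the scalar and recover $\phi$ on positives from the decomposition $x = x - 0$. No gaps; nothing further to add.
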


\begin{proof}
\textbf{Faithfulness}\qquad
Let $f,g : E \rightarrow F$ and $f_+ = g_+$, \emph{i.e.} for all $x \in E_+$, we have $f(x) = g(x)$. Then since $E$ is that span of $E_+$, we have that each element $x$ of $E$ is expressible as $x_+ - x_-$ for $x_+,x_- \in E_+$. Then
\[
f(x) = f(x_+ - x_-) = f(x_+) -f(x_-) = g(x_+) - g(x_-) = g(x).
\]

\paragraph{Fullness} \qquad
Suppose $g : E_+ \rightarrow F_+$ is a cone map, \emph{i.e.} a monoid homomorphism preserving multiplication by a nonnegative real number. We extend it to a linear map $f : E \rightarrow F$ as follows. Let $E \ni x = x_+ - x_-$ as in the previous part. Define $f(x) = g(x_+) - g(x_-)$. We first show that this is well defined, so let $y_+,y_-$ be elements of $E_+$ such that $x = y_+ - y_-$. Then
\begin{align*}
&y_+ - y_- = x_+ - x_- \implies y_+ + x_- = x_+ + y_-\\
\implies &g(y_+ + x_-) = g(x_+ + y_-) \implies g(y_+) - g(y_-) = g(x_+) - g(x_-),
\end{align*}
which shows that $f(x)$ is independent of the decomposition into positive parts that has been chosen. 

Since each positive element $x$ can be expressed as $x - 0$, we have that $f(x) = g(x)$ on positive elements, and in particular that $f$ preserves the positive cone and $f_+ = g$. We therefore only need to show that $f$ is in fact linear.

So now let $x = x_+ - x_-$ and $y = y_+ - y_-$. Then
\begin{align*}
f(x + y) &= f((x_+ + y_+) - (x_- + y_-)) \\
 &= g(x_+ + y_+) - g(x_- + y_-) \\
 &= g(x_+) + g(y_+) - g(x_-) - g(y_-) \\
 &= (g(x_+) - g(x_-)) + (g(y_+) - g(y_-)) \\
 &= f(x) + f(y).
\end{align*}
It remains to show that $f$ preserves multiplication by a scalar $\alpha \in \R$. There are three cases, $\alpha = 0$, $\alpha > 0$ and $\alpha < 0$. The case that $\alpha = 0$ is trivial because $0 \in E_+$ and $g$ is a cone map, so preserves $0$. In the case that $\alpha > 0$, $\alpha(x_+ + x_-) = \alpha x_+ - \alpha x_-$ is still a decomposition into positive elements, so
\begin{align*}
f(\alpha x) &= g(\alpha x_+) - g(\alpha x_-) \\
 &= \alpha (g(x_+) - g(x_-)) \\
 &= \alpha f(x).
\end{align*}
In the case that $\alpha < 0$, then $(-\alpha x_-) - (-\alpha) x_+ = x$ is a decomposition into positive elements, so
\begin{align*}
f(\alpha x) &= g(-\alpha x_-) - g(-\alpha x_+) \\
 &= -\alpha g(x_-) - (-\alpha)g(x_+) \\
 &= \alpha (g(x_+) - g(x_-)) \\
 &= \alpha f(x).
\end{align*}
\end{proof}

The predual $A_*$ of a W$^*$-algebra $A$ can be identified with the ultraweakly continuous linear functionals \cite[I.3.3 Theorem 1 (iii)]{dixmiervna}, and the positive elements with the normal positive linear functionals \cite[I.4.2 Theorem 1]{dixmiervna}. In particular, the map $\zeta_A : A \rightarrow (A_*)^*$ defined by $\zeta_A(a)(\phi) = \phi(a)$ is an isomorphism.

Hermitian linear functionals are those functionals $\phi$ such that $\overline{\phi(a^*)} = \phi(a)$ for all $a \in A$ \cite[1.1.10]{dixmier}, and they are the $\R$-span of the positive ones \cite[Theorem 12.3.3]{dixmier}. Every complex normal linear functional can be decomposed into real and imaginary parts, which are Hermitian, so the $\C$-span of the positive normal functionals is the ultraweakly continuous functionals. The Hermitian elements of the predual form a base-norm space \cite[Corollary 2.96]{alfsen-shultz}. So for each W$^*$-algebra we define $\Herm(A)$ to be this base-norm space. If we take $\BNSP$ to be the category of Banach base-norm spaces with positive maps, and we define a functor $\Herm : \WStarP \rightarrow \BNSP$, for $f : A \rightarrow B$ a positive map of W$^*$-algebras as $\Herm(f)(\phi) = \phi \circ f$,
for $\phi \in \Herm(B)$. 

We have that $\NPLF = \blank_+ \circ \Herm$, so we reduce to showing that $\Herm : \WStarP \rightarrow \PoVect$ is full and faithful.

\begin{theorem}
The functor $\Herm$ is full and faithful.
\end{theorem}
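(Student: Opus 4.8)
The plan is to treat faithfulness and fullness separately, the latter carrying essentially all of the work. Throughout I would use the two structural facts already recorded: that $\zeta_A\colon A\to (A_*)^*$ is an isomorphism, and that $\Herm(A)$ is a base-norm space whose positive cone consists of the normal positive functionals. The key consequence I would extract first is that $\zeta_A$ restricts, on self-adjoint and Hermitian parts, to a real-linear isomorphism $\sa A\cong (\Herm A)^*$. This identification is what really drives the argument, since it is what will force the map constructed below to take values in $B$ itself rather than in some larger bidual.

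For faithfulness, suppose $f,g\colon A\to B$ are normal positive maps with $\Herm(f)=\Herm(g)$, that is, $\phi\circ f=\phi\circ g$ for every $\phi\in\Herm(B)$. Since each positive map is $*$-preserving, $f(a)$ and $g(a)$ are self-adjoint whenever $a$ is; and since $B_*$ separates the points of $B$, the Hermitian functionals separate self-adjoint elements. Thus $\phi(f(a))=\phi(g(a))$ for all Hermitian $\phi$ forces $f(a)=g(a)$ on self-adjoint $a$, hence on all of $A$ by writing a general element as $a_1+i\,a_2$ with $a_1,a_2$ self-adjoint. Therefore $f=g$.

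For fullness, I would start from a positive linear map $\Phi\colon\Herm(B)\to\Herm(A)$ and manufacture $f\colon A\to B$ with $\Herm(f)=\Phi$. First I would fix $a\in A^+$ and consider the real-linear functional $\psi\mapsto\Phi(\psi)(a)$ on $\Herm(B)$. As $\Phi$ is positive and $a\ge 0$, this functional is positive, hence bounded by the cited closed-cone result, and so defines via $(\Herm B)^*\cong\sa B$ an element $f(a)\in\sa B$, which is moreover positive because a functional positive on the cone corresponds to a positive element. Extending over $\sa A=A^+-A^+$ exactly as in the preceding proposition, and then complexifying $A=\sa A\oplus i\,\sa A$, yields a linear, $*$-preserving, positive map $f\colon A\to B$ with $\phi(f(a))=\Phi(\phi)(a)$ for all $\phi\in\Herm(B)$ and $a\in A$, i.e.\ $\phi\circ f=\Phi(\phi)$, which is the statement $\Herm(f)=\Phi$.

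It remains to see that $f$ is normal, which is immediate once the construction is in place: for every normal positive $\psi$ on $B$ we have $\psi\circ f=\Phi(\psi)\in\Herm(A)\subseteq A_*$, so $\psi\circ f$ is ultraweakly continuous, and since $B_*$ is the complex span of its normal positive functionals, $f$ is ultraweakly continuous, i.e.\ normal. I expect the main obstacle to lie precisely in the two analytic inputs that make the definition of $f$ legitimate: the automatic boundedness of positive functionals on the base-norm space $\Herm(B)$ (the Namioka--Schaefer result), and the identification $(\Herm B)^*\cong\sa B$, which is exactly where the existence of a predual for $B$ is used and which guarantees that $f$ actually lands in $B$.
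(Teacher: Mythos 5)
Your proposal is correct and follows essentially the same route as the paper's proof: faithfulness via separation of points by normal functionals, and fullness by using the automatic boundedness of positive functionals on the predual together with $\zeta$ to define $f$ on the positive cone, extending by the cone-to-ordered-vector-space argument, and verifying normality by checking that $\psi\circ f$ lands in $A_*$. Your explicit mention of the restriction $\sa{A}\cong(\Herm A)^*$ and of the complexification step $A=\sa{A}\oplus i\,\sa{A}$ is a welcome clarification of a point the paper passes over quickly, but it is the same argument.
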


\begin{proof}
We first prove it is faithful as follows. Let $f, g : A \rightarrow B$ be positive ultraweakly continuous (or normal) maps between W$^*$-algebras, such that $\Herm(f) = \Herm(g)$. If $f \neq g$, there is an $a \in A$ such that $f(a) \neq f(b)$. Since $B$ is separated by normal states, there is a $\phi \in \Herm(B)$ such that $\phi(f(a)) \neq \phi(f(b))$, and therefore $\Herm(f)(\phi) \neq \Herm(g)(\phi)$, contradicting the assumption that $\Herm(f) = \Herm(g)$. Therefore $\Herm$ is faithful.\\

We now prove the fullness. Let $g : \Herm(B) \rightarrow \Herm(A)$ be a positive map. Let $a : \Herm(A) \rightarrow \R$ be a positive linear map. By Theorem \cite[Th. V.5.5(ii)]{schaefer-top-vect} it is bounded, and so $\zeta_A^{-1}(a)$ exists, an element of $A$, which is necessarily positive. Since $a \circ g : \Herm(B) \rightarrow \R$ is also positive, and therefore defines a positive element of $B$ under $\zeta_B$, we have a function mapping positive elements of $A$ to positive elements of $B$, defined as $\zeta_B^{-1} \circ (\blank \circ g) \circ \zeta_A$. We show that it is a cone map, and therefore extends to a positive linear map $A \rightarrow B$ as follows.\\

\noindent \textbf{Preservation of zero} \qquad Since each linear functional maps $0$ to $0$, $\zeta_A(0)$ is the constant zero map. Precomposing with $g$ produces another constant zero map $\Herm(B) \rightarrow \R$, so $\blank \circ g$ maps zero to zero.

\noindent \textbf{Additivity} \qquad Addition in $A$ and $B$ corresponds to pointwise addition. Let $a,b : \Herm(A) \rightarrow \R$ be positive linear maps. Then for each $\phi \in \Herm(B)$, we have
\begin{align*}
((a + b) \circ g)(\phi) &= (a+b)(g(\phi)) \\
 &= a(g(\phi)) + b(g(\phi)) \\
 &= (a \circ g)(\phi) + (b \circ g)(\phi) \\
 &= (a \circ g + b \circ g)(\phi).
\end{align*}
\textbf{Preservation of positive multiplications} \qquad
Let $a : \Herm(A) \rightarrow \R$ be a positive linear map, $\alpha \in \Rgeq$ and $\phi \in \Herm(B)$. Then 
\begin{align*}
((\alpha a) \circ g)(\phi) &= (\alpha a)(g(\phi)) \\
 &= \alpha a(g(\phi)) \\
 &= \alpha (a \circ g)(\phi) \\
 &= (\alpha (a \circ g))(\phi).
\end{align*}
We therefore have a positive linear map $f : A \rightarrow B$, but have not yet shown that it is normal or that $\Herm(f) = g$. 

We first show that $\blank \circ f = g$ on all $\phi \in B_*$. This will imply that $f$ is ultraweakly continuous and so that $\Herm(f) = g$. So let $\phi \in B_*$ and $a \in A$. Then
\begin{align*}
(\blank \circ f)(\phi)(a) &= \phi(f(a)) \\
 &= \zeta_B(f(a))(\phi) & \text{(definition of $\zeta$)}\\
 &= \zeta_B(\zeta_B^{-1}((\blank \circ g)(\zeta_A(a))))(\phi) & \text{(definition of $f$)} \\
 &= (\zeta_A(a) \circ g)(\phi) \\
 &= \zeta_A(a)(g(\phi)) \\
 &= g(\phi)(a).
\end{align*}

By \cite[IV.2.2]{schaefer-top-vect} this implies that $f$ is weak-* continuous, and the definition of $\Herm$ implies $\Herm(f) = g$.
\end{proof}

Considering that the composite of two full and faithful functors is full and faithful, one obtains the following corollary.

\begin{corollary}
The functor $\NPLF$ is full and faithful.
\end{corollary}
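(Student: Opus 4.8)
The plan is to factor the statement through a cone-enriched version and then argue that forgetting the cone structure down to plain sets loses nothing. Concretely, I would first prove that the hom-cone functor $\opp{\wCPNU}\to[\NatCP,\Cone]$, sending $A$ to the functor $M_n\mapsto\wCPNU(A,M_n)$, is full and faithful, and then show that the underlying hom-set functor $\opp{\wCPNU}\to[\NatCP,\Set]$ inherits this property because every natural transformation of hom-sets is forced to be a family of cone homomorphisms.

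For the first step I would combine Proposition~\ref{prop:NPLF-ff} with Theorem~\ref{thm:CPNUff}. Applying the positive-functional functor $\wPNU(-,\CC)$ pointwise to the matrix functor $M:\wCPNU\to[\Mat,\wPNU]$ produces a full and faithful functor $\opp{\wCPNU}\to[\opp{\Mat},\Cone]$ that sends $A$ to $n\mapsto\wPNU(M_n(A),\CC)$ (postcomposition with a full and faithful functor is again full and faithful on functor categories). To move from the index category $\opp{\Mat}$ to $\NatCP$, I would use the self-duality of $\Mat$ given by transposition $V\mapsto V^\top$ to define an identity-on-objects functor $F:\opp{\Mat}\to\NatCP$ built from the completely positive maps of the form $V^*(-)V$ (cf.~Choi's theorem \cite{choi}), together with the standard natural bijection between states $M_n(A)\to\CC$ and completely positive maps $A\to M_n$ (e.g.~\cite{stormer}). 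Since $F$ is the identity on objects, this natural isomorphism promotes full-and-faithfulness along the reindexing, yielding that $\opp{\wCPNU}\to[\NatCP,\Cone]$ is full and faithful.

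It then remains to descend from $\Cone$ to $\Set$. Faithfulness is immediate, since the forgetful functor $\Cone\to\Set$ is faithful. The real content is fullness: I must show that any family of functions $\phi_n:\wCPNU(A,M_n)\to\wCPNU(B,M_n)$ that is natural in $n\in\NatCP$ automatically preserves the cone operations, for then $\phi$ is a natural transformation of the hom-cone functors and the previous step supplies the required CP-map. Preservation of scalar multiplication, $\phi_n(\lambda\cdot f)=\lambda\cdot\phi_n(f)$, follows at once from naturality against the CP-endomorphism of $M_n$ given by scaling by $\lambda\geq 0$. Preservation of addition is the subtle point. I would encode a pair $f,g:A\to M_n$ as the block-diagonal map $h:A\to M_{2n}$ with $h(a)=(\begin{smallmatrix}f(a)&0\\0&g(a)\end{smallmatrix})$, which is exactly characterised as a fixed point of the idempotent CP-map $j:M_{2n}\to M_{2n}$, $j(\begin{smallmatrix}a&b\\c&d\end{smallmatrix})=(\begin{smallmatrix}a&0\\0&d\end{smallmatrix})$, and then realise $f+g$ as $t\circ h$ for the CP-map $t:M_{2n}\to M_n$, $t(\begin{smallmatrix}a&b\\c&d\end{smallmatrix})=a+d$. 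Naturality of $\phi$ against $j$, the two corner compressions $M_{2n}\to M_n$, and $t$ then pins down $\phi_{2n}(h)$ as the block-diagonal map built from $\phi_n(f)$ and $\phi_n(g)$, whence $\phi_n(f+g)=\phi_n(f)+\phi_n(g)$.

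I expect this addition argument to be the main obstacle: the scalar law and faithfulness are essentially formal, but recognising cone addition as a composite of canonical CP-maps routed through $M_{2n}$, and verifying that the fixed-point characterisation of block-diagonal maps is respected by $\phi$, is where the genuine work lies. As a conceptual check that such a result must hold, one can observe that $[\NatCP,\Set]\simeq[\FDCP,\Set]$ (the Karoubi envelope of $\NatCP$ contains all finite-dimensional C*-algebras), that the commutative finite-dimensional C*-algebras $\CFDCP$ form the Lawvere theory of cones, and hence that $\Cone$ sits as a full subcategory of $[\CFDCP,\Set]$; this explains \emph{a priori} why naturality over $\NatCP$ entails the cone laws.
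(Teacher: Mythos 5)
You have proved the wrong statement. This corollary is the conclusion of Appendix~\ref{appendix:proof-NPLF-ff}, whose sole purpose is to establish Proposition~\ref{prop:NPLF-ff}: here $\NPLF$ denotes the normal positive linear functional functor $\wPNU(-,\CC):\opp{\wPNU}\to\Cone$, sending a W*-algebra to its \emph{cone of normal states}, not the presheaf-valued hom functor $\opp{\wCPNU}\to[\NatCP,\Set]$. What you have written is, almost verbatim, the paper's proof of Theorem~\ref{thm:main-theorem} --- and its very first step is to ``combine Proposition~\ref{prop:NPLF-ff} with Theorem~\ref{thm:CPNUff}''. As a proof of the present corollary this is circular: you invoke exactly the statement to be proved. (Your argument is a sound account of how the corollary is \emph{used}, including the genuinely delicate point that naturality over $\NatCP$ forces preservation of cone addition via the detour through $M_{2n}$; but none of that bears on why $\NPLF$ itself is full and faithful.)

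The missing content is functional-analytic rather than categorical. The paper factors $\NPLF$ as $\blank_+\circ\Herm$, where $\Herm$ sends a W*-algebra to the Hermitian part of its predual (a directed partially ordered Banach space) and $\blank_+$ takes the positive cone. Full and faithfulness of $\blank_+$ on positively generated ordered vector spaces is an elementary extension argument (write $x=x_+-x_-$ and check well-definedness and linearity). The real work is fullness of $\Herm$: given a positive map $g:\Herm(B)\to\Herm(A)$ one must manufacture a normal positive map $f:A\to B$ with $\Herm(f)=g$. This uses (i) the identification $\zeta_A:A\cong(A_*)^*$ of a W*-algebra with the dual of its predual; (ii) the nontrivial fact that every \emph{positive} linear functional on $\Herm(A)$ is automatically bounded, because the positive cone of a base-norm space is closed and generating \cite[Th.~V.5.5(ii)]{schaefer-top-vect} --- without this one cannot transpose $\blank\circ g$ back through $\zeta_B^{-1}$ to land in $B$; and (iii) a check that the resulting $f$ is weak-* (hence ultraweakly) continuous, which is what makes it normal. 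Your proposal contains no trace of these steps, and they cannot be replaced by naturality tricks over $\NatCP$, since the corollary concerns single positive functionals $A\to\CC$ with no matrix levels in sight.
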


\hide{\section{Complement to the proof of Theorem \ref{thm:main-theorem}}
\label{appendix:proof-main-theorem}

Let $S(A):\NatCP\to \Set$ be the functor given by $S(A)(n)=\wCPNU(M_n(A),\CC)$. 
Let $\phi_n:SA(n)\to SB(n)$ be a natural family of functions. 
Let $s:M_n(A)\to \CC$.
We show that each $\phi_n$ is a cone homomorphism, i.e. that
$\phi_n(\lambda.s)=\lambda.\phi_n(s)$
and $\phi_n(s+t)=\phi_n(s)+\phi_n(t)$. 
For scalar multiplication,
let $\lambda$ be a positive scalar and let 
$f_\lambda:M_n\to M_n$ be the completely positive map given by 
\[f_\lambda\left(\begin{matrix}x_{11}&\dots& x_{1n}\\\vdots&\ddots&\vdots\\x_{n1}&\dots&x_{nn}\end{matrix}\right)
=
\left(\begin{matrix}\lambda.x_{11}&\dots& \lambda.x_{1n}\\\vdots &\ddots&\vdots\\\lambda.x_{n1}&\dots & \lambda.x_{nn}\end{matrix}\right)
\text{\quad
so that \quad}
\lambda.s=SAf_\lambda(s)\]
Now, 
\begin{align*}
\phi_n(\lambda.s)
&= \phi_n(SAf_\lambda(s))
\\&= SBf_\lambda (\phi_n(s))
\\&= \lambda.\phi_n(s)\text.
\end{align*}
For the monoid structure of the cone, we use a characterization of 
pairs of states. Fix a natural number $n$.
Let $j:M_{2n}\to M_{2n}$ be the  completely positive map
$j(\begin{smallmatrix}a&b\\c&d\end{smallmatrix})=(\begin{smallmatrix}a&0\\0&d\end{smallmatrix})$.
We show that 
\begin{equation}
\{u\in SA(2n)~|~SAj(u)=u\}\cong SA(n)\times SA(n)\text.
\label{eqn:bij}
\end{equation}
Let $p,q:M_n\to M_{2n}$ be the following completely positive maps:
\[
p(a)=(\begin{smallmatrix}a&0\\0&0\end{smallmatrix})
\quad
q(a)=(\begin{smallmatrix}0&0\\0&a\end{smallmatrix})
\]
Note that we have $SAp,SAq:SA(2n)\to SA(n)$. 
The paired function 
$(SAp,SAq):SA(2n)\to SA(n)\times SA(n) $ restricts to give a bijection as
in~\eqref{eqn:bij}, as follows.
Any pair of positive maps
$s,t:M_n(A)\to \CC$
induce a positive map $\langle s,t \rangle:M_{2n}(A)\to\CC$ 
\begin{equation}
\langle s,t \rangle (\begin{smallmatrix}a&b\\c&d\end{smallmatrix})=s(a)+t(b)
\label{eqn:u}
\end{equation}
such that $SAp\langle s,t \rangle=s$, $SAq\langle s,t \rangle=t$,
and $SAj(u)=\langle SAp(u), SAq(u)\rangle$.

Since $\phi:SA\to SB$ is natural it preserves this encoding of pairs:
\begin{align*}
\phi_{2n}\langle s,t\rangle
&=
\phi_{2n}(SAj \langle SAp\langle s,t\rangle,SAq\langle s,t\rangle\rangle)
\\&=
\phi_{2n}(SAj \langle s,t\rangle)
\\&=
SBj(\phi_{2n} \langle s,t\rangle)
\\&=
\langle SBp (\phi_{2n} \langle s,t\rangle),
SBq(\phi_{2n} \langle s,t\rangle)\rangle
\\&=
\langle \phi_{n} (SAp \langle s,t\rangle),
\phi_{n} (SAq \langle s,t\rangle)\rangle
\\&=
\langle \phi_{n} (s),
\phi_{n} (t)\rangle\text.
\end{align*}

To conclude that $\phi$ preserves the monoid structure, consider
also the completely positive map $r:M_{n}\to M_{2n}$:
\[
r(a)=(\begin{smallmatrix}a&0\\0&a\end{smallmatrix})\]
which has the property that $SAr\langle s,t\rangle=s+t$. 
We use this to deduce that
\begin{align*}
\phi_n(s+t)
&=
\phi_n(SAr\langle s,t\rangle)
\\
&=
SAr(\phi_{2n}\langle s,t\rangle)
\\
&=
SAr\langle\phi_{n}(s),\phi_n(t)\rangle
\\&=
\phi_n(s)+\phi_n(t)\text.\end{align*}
}
\end{appendix}
\end{document}